\newtheorem{theorem}{Theorem}
\newtheorem{lemma}{Lemma}
\newtheorem{proposition}{Proposition}
\theoremstyle{definition}
\newtheorem{definition}{Definition}
\theoremstyle{remark}
\newtheorem{remark}{Remark}
\newcommand{\nn}{\nonumber}
\newcommand{\bmat}{\left[ \begin{matrix}}
	\newcommand{\emat}{\end{matrix} \right]}
\newcommand{\innerprod}[2]{\langle{#1},\,{#2}\rangle}
\DeclareMathOperator{\prox}{prox}
\DeclareMathOperator{\argmin}{argmin}
\DeclareMathOperator{\trace}{tr}
\DeclareMathOperator{\E}{{\mathbb E}}
\newcommand{\Rbb}{\mathbb R}
\newcommand{\Zbb}{\mathbb Z}
\newcommand{\Sbb}{\mathbb S}
\newcommand{\Nbb}{\mathbb N}
\newcommand{\xb}{\mathbf  x}
\newcommand{\yb}{\mathbf  y}
\newcommand{\sbf}{\mathbf  s}  
\newcommand{\wb}{\mathbf  w}
\newcommand{\vb}{\mathbf  v}
\newcommand{\db}{\mathbf  d}
\newcommand{\ub}{\mathbf  u}
\newcommand{\lb}{\boldsymbol{\ell}}
\newcommand{\zerob}{\mathbf 0}
\newcommand{\Ab}{\mathbf A}
\newcommand{\Eb}{\mathbf E}
\newcommand{\Ib}{\mathbf I}
\newcommand{\Lb}{\mathbf L}
\newcommand{\Sb}{\mathbf S}
\newcommand{\Ub}{\mathbf U}
\newcommand{\Vb}{\mathbf V}
\newcommand{\Gammab}{\boldsymbol{\Gamma}}
\newcommand{\Sigmab}{\boldsymbol{\Sigma}}
\newcommand{\supp}[1]{\mathrm{supp}(#1)}
\renewcommand{\vec}{\mathrm{vec}}
\newcommand{\Dscr}{\mathscr{D}}
\newcommand{\Ncal}{\mathcal{N}}
\newcommand{\F}{\mathrm{F}}
\newcommand{\srm}{\mathrm{s}}
\title{\LARGE \bf
	A Newton Interior-Point Method for $\ell_0$ Factor Analysis}
\author{Linyang Wang, Wanquan Liu, and Bin Zhu
	\thanks{This work was supported in part by Shenzhen Science and Technology Program (Grant No.~202206193000001-20220817184157001), the Fundamental Research Funds for the Central Universities, and the ``Hundred-Talent Program'' of Sun Yat-sen University.}
	\thanks{The authors are with the School of Intelligent Systems Engineering, Sun Yat-sen University, Gongchang Road 66, 518107 Shenzhen, China. 
		Emails: {\tt\small wangly227@mail2.sysu.edu.cn} (L. Wang),
		{\tt\small \{liuwq63, zhub26\}@mail.sysu.edu.cn} (W. Liu and B. Zhu).
	}
}
\begin{document}
\maketitle
\thispagestyle{empty}
\pagestyle{empty}

\begin{abstract}
		Factor Analysis is an effective way of dimensionality reduction achieved by revealing the low-rank plus sparse structure of the data covariance matrix.
        The corresponding model identification task is often formulated as an optimization problem with suitable regularizations.        
        In particular, we use the nonconvex discontinuous $\ell_0$ norm in order to induce the sparsity of the covariance matrix of the idiosyncratic noise. 
        This paper shows that such a challenging optimization problem can be approached via an interior-point method with inner-loop Newton iterations. 
        To this end, we first characterize the solutions to the unconstrained $\ell_0$ regularized optimization problem through the $\ell_0$ proximal operator, and demonstrate that local optimality is equivalent to the solution of a stationary-point equation.
        The latter equation can then be solved using standard Newton's method, and the procedure is integrated into an interior-point algorithm so that inequality constraints of positive semidefiniteness can be handled.
        Finally, numerical examples validate the effectiveness of our algorithm.
\end{abstract}

\section{Introduction}

Factor Analysis (FA) is one of the most widely used tools in multivariate analysis, which has been applied in various fields such as systems and control \cite{bottegal2014modeling}, econometrics \cite{forni2001generalized}, biology \cite{hochreiter2006new}, psychology \cite{shapiro1982rank}, and finance \cite{stock2002forecasting}. 
The standard factor model is particularly simple, and it reads as
\begin{equation}\label{generate_model}
	\yb_i = \Gammab \ub_i + \wb_i,\quad i=1,2,\dots,N,
\end{equation}
where $\yb_i\in\Rbb^p$ is the observed realization of a zero-mean random process indexed by $i\in\Zbb$, $\Gammab\in\Rbb^{p\times r}$ is a loading matrix with full column rank, components of the random vector $\ub_i\sim \Ncal(0,\Ib_r)$ represent the hidden factors, and  $\wb_i\sim \Ncal(0, \hat \Sb)$ is the idiosyncratic noise which is independent of $\ub_i$. The noise covariance matrix is not known and is to be estimated.
Typically it is assumed that $r\ll p$ which means that the observations are explained by a small number of common factors, and thus dimensionality reduction can be achieved.
Under the model assumptions above, the covariance matrix of $\yb_i$ can be expressed as
\begin{equation}\label{Sigma_decomp}
	\hat\Sigmab := \E(\yb_i\yb_i^\top) =\Gammab\Gammab^\top+\hat\Sb = \hat\Lb + \hat\Sb
\end{equation}
where the $p\times p$ matrix $\hat\Lb := \Gammab\Gammab^\top$ has also rank $r$.
Given $N$ i.i.d.~samples $\yb_i$ from the model, the FA problem is usually posed as
estimating the covariance matrices $\hat \Lb$ and $\hat{\Sb}$. Clearly, the factor loading matrix $\Gammab$ can be recovered from $\hat{\Lb}$ despite in a non-unique fashion.

Recent interests in FA center around optimization formulations which aim to find the additive decomposition \eqref{Sigma_decomp} from a noisy estimate $\check{\Sigma}$ of the data covariance matrix $\hat{\Sigma}$.
 see e.g., \cite{bertsimas2017certifiably,ciccone2018factor,FA_TAC_24}.
In particular, the paper \cite{ciccone2018factor} adopted a classic additional assumption that $\hat{\Sb}$ is diagonal which implies that the components in the noise vector $\wb_i$ are independent. On the other hand, the paper \cite{FA_TAC_24} relaxed such an assumption by requiring $\hat{\Sb}$ to be \emph{sparse} as measured by the $\ell_0$ norm which is equal to the number of nonzero elements in a vector or matrix.
In other words, in the latter case the noise components are allowed to be correlated with a small number of other components which should result in a more general model.
Following such an idea, \eqref{Sigma_decomp} can be understood as a low-rank plus sparse decomposition for covariance matrices.

Although the $\ell_0$ norm represents the most direct indicator of the sparsity of variables, the resulting optimization problem is \emph{nonconvex and nonsmooth}, which creates difficulties for both theoretical analysis and algorithmic development.
To deal with this class of problems, the dominant trend in the literature has been utilizing convex relaxation (the $\ell_1$ norm) or other nonconvex (but continuous) regularizations instead of the $\ell_0$ norm itself, see e.g., \cite{agarwal2012noisy,wen2019robust}.
In the past decade, however, there have been works in the optimization community that address the  $\ell_0$ optimization problem directly, see e.g., \cite{zhou2021newton,zhou2021global,zhou2021quadratic}. The key technical tool is the proximal operator with respect to the $\ell_0$ norm which was discovered much earlier \cite{blumensath2008iterative,blumensath2009iterative}.




For the specific $\ell_0$ regularized optimization problem formulated in \cite{FA_TAC_24} for FA, it was solved by the Alternating Direction Method of Multipliers (ADMM) in \cite{Wang-Zhu-2023-SAM} and by the Block Coordinate Descent (BCD) algorithm in \cite{FA_CDC_24}, both showing superior performances in comparison with the $\ell_1$ convex relaxation. These algorithms are simple to implement and are computationally cheap in each iteration but are often slow in terms of the convergence rate. In particular, the ADMM typically converges sublinearly \cite{boyd2011distributed} (assuming convergence) which means that many iterations are needed before reaching practical convergence. In this paper, we aim to accelerate convergence for the solution of the $\ell_0$ FA problem using the interior-point method (IPM) which handles inequality constraints (for positive semidefniteness) with logarithmic barrier functions. In each inner loop of the IPM, an \emph{unconstrained} $\ell_0$ regularized optimization problem is solved with Newton's method, a powerful second-order algorithm which typically converges \emph{quadratically}.
Numerical examples also demonstrate the improved convergence rate of the proposed IPM in comparison with the ADMM and BCD.

	\emph{Notation:}
In this article, we use bold uppercase letters for matrices and lowercase bold letters for vectors. Given a vector $\xb\in\Rbb^m$, let $\Nbb_m:=\{ 1,2,\cdots, m\}$ be the index set for the $m$ components. We write $\operatorname{supp}(\xb)$, a subset of $\Nbb_m$, for the support set of $\xb$ that consists of indices of the nonzero components of $\xb$.  
For a set $T\subseteq A$, $\bar{T}= A \backslash T$ and $|T|$ denote its complementary set and cardinality.
The symbol $\|\xb\|$ represents the standard Euclidean norm.  The set of symmetric matrices of size $p\times p$ is denoted by $\Sbb^p$.
For a matrix $\Ab\in\Sbb^{p}$, $\Ab\succ 0$ and $\Ab\succeq 0$ mean that $\Ab$ is positive definite and positive semidefinite,  respectively.
Given two matrices $\Ub$ and $\Vb$ in $\Sbb^p$, their inner product 
is defined by $\innerprod{\Ub}{\Vb}=\trace(\Ub \Vb)$, and the induced norm $\|\Ub\|_\F=\sqrt{\innerprod{\Ub}{\Ub}}$ is the Frobenius norm.

\section{Problem Review for $\ell_0$ Factor Analysis}\label{sec:prob}

Our problem formulation is taken from \cite{FA_TAC_24}, and it is briefly reviewed next.
Consider the following $\ell_0$ regularized optimization model with positive semidefinite matricial variables $\Lb$ and $\Sb$ of size $p\times p$: 
\begin{equation}\label{primal_opt_form}
	\underset{(\Lb,\,\Sb)\in\Dscr}{\min}\quad   f(\mathbf{L},\mathbf{S})+ C\Vert \mathbf{S}\Vert _0, 
\end{equation}
where, the feasible set
\begin{equation}\label{primal_feasible_set}
	\Dscr:=\{(\Lb, \Sb) : \Lb\succeq 0,\ \Sb\succeq 0,\ \text{and}\ \Lb+\Sb\succ 0 \},
\end{equation}
and 
\begin{equation}\label{f_smooth}
	f(\Lb,\Sb) :=\trace(\mathbf{L})+\mu\left\{ \operatorname{tr}\left[(\mathbf{L}+\mathbf{S}) \check{\mathbf{\Sigma}}^{-1}\right] - \log \operatorname{det}(\mathbf{L}+\mathbf{S})\right\}
\end{equation}
is the smooth part in the objective function. 
The number $C>0$ is a regularization parameter, and
\begin{equation}\label{def_l0_norm}
    \|\Sb\|_0 = \sum_{i=1}^p \sum_{j=1}^p |s_{ij}|_0,
\end{equation}
is the matricial $\ell_0$ norm which counts the number of nonzero entries with $|s_{ij}|_0=1$ if  $s_{ij}\neq 0$, and $|0|_0=0$.
The specific terms in \eqref{f_smooth} call for further explanations: $\trace(\mathbf{L})$ is a convex surrogate for the rank function of a positive semidefinite matrix, the number $\mu>0$ is another regularization parameter, and the part in the brace can be interpreted as model mismatch which is essentially (up to a constant) the \emph{Kullback--Leibler divergence} (see e.g., \cite{LP15}) between the candidate covariance matrix $\Sigmab:=\Lb+\Sb$ and the sample covariance matrix $\check{\Sigmab}$. 
The latter object is often computed from the $N$ samples of $\yb_i$ via an average, i.e.,
\begin{equation}\label{sample_cov}
	\check{\Sigmab} = \frac{1}{N} \sum_{i=1}^{N} \yb_i \yb_i^\top,
\end{equation}
and we assume $\check{\Sigmab}\succ 0$ so that it can be inverted in \eqref{f_smooth}.


%

In order to devise a second-order algorithm to solve the optimization problem \eqref{primal_opt_form}, we need to first recall the $\ell_0$ proximal operator and some related notations which are given next.

\emph{$\ell_0$ proximal operator for  matricial variables in $\Sbb^p$:} Given a parameter $\gamma>0$, the proximal operator of $C\|\cdot\|_0$ is defined as
\begin{equation}\label{prox_matrix}
	\begin{aligned}
		\prox_{\gamma C \|\cdot\|_0} (\Sb) & := \underset{\Vb\in\Sbb^{p}}{\argmin}\ C\|\Vb\|_0 + \frac{1}{2\gamma} \|\Vb-\Sb\|_\F^2 \\
		& = \underset{\Vb\in\Sbb^{p}}{\argmin}\ \sum_{i,\,j} \left[ C|v_{ij}|_0 + \frac{1}{2\gamma} (v_{ij}-s_{ij})^2 \right],
	\end{aligned}
\end{equation}
where the second equality follows from \eqref{def_l0_norm} and the fact that the squared Frobenius norm can be decoupled elementwise.
According to \cite{blumensath2008iterative, blumensath2009iterative, attouch2013convergence}, the proximal operator \eqref{prox_matrix}, which is itself an optimization problem, admits a particularly simple analytic solution:
\begin{equation}\label{prox_scalar_sol}
	[\prox_{\gamma C \|\cdot\|_0} (\Sb)]_{ij} = \begin{cases}
		0, & \text{if}\ |s_{ij}| < \sqrt{2\gamma C} \\
		0\ \text{or}\ s_{ij}, & \text{if}\ |s_{ij}| = \sqrt{2\gamma C} \\
		s_{ij}, & \text{if}\ |s_{ij}| > \sqrt{2\gamma C}.
	\end{cases}
\end{equation}
Indeed, the solution to the matricial $\ell_0$ proximal operator is obtained by applying the scalar solution elementwise. We notice from the middle line of \eqref{prox_scalar_sol}  that the value of the proximal operator is not unique defined for $|s_{ij}| = \sqrt{2\gamma C}$. Thus in general, $\prox_{\gamma C \|\cdot\|_0} (\Sb)$ should be understood as a \emph{set-valued} mapping.

\section{Optimality Theory for $\tau$-Minimization}\label{sec:theory}

The idea of the interior-point method (IPM) is to convert the constrained optimization problem \eqref{primal_opt_form} into a sequence of unconstrained problems by introducing a  suitable barrier function.
In this section, we analyze the unconstrained problem with a fixed barrier parameter $\tau>0$.
More precisely, we consider the following problem, which is  referred to as \emph{$\tau$-minimization} also in the title of this section:
\begin{equation}\label{opt_barrier}
	\underset{ \mathbf{L},\, \mathbf{S}}{\operatorname{min}}    \quad f_\tau(\Lb,\Sb)+C\|\mathbf{S}\|_0,
\end{equation}
where $f_\tau(\Lb,\Sb) := f(\Lb,\Sb) -\tau [\log\det(\Lb) + \log\det(\Sb)]$
is again the smooth part in the objective function, but this time, modified by the logarithmic barrier function.
Let 
\begin{equation}\label{strict_feas_set}
	\Dscr_{\srm}=\{(\Lb, \Sb) : \Lb\succ0,\ \Sb\succ 0\}
\end{equation}
denote the domain of definition of $f_\tau$ which is also the strictly feasible set of \eqref{primal_opt_form}.
Apparently, $f_\tau$ takes finite values within $\Dscr_{\srm}$, while it assumes a value of $+\infty$ for boundary points, i.e., $\Lb$ or $\Sb$ is positive semidefinite and singular.
Therefore, the set $\Dscr_{\srm}$ can be viewed as the effective domain of the $\tau$-minimization problem \eqref{opt_barrier}, and the minimizer will remain inside $\Dscr_{\srm}$.


Since our aim is to develop a second-order algorithm for \eqref{opt_barrier}, the matricial variables are inconvenient. Hence in the following, we will \emph{vectorize} the problem by introducing a suitable basis and doing computations with coordinates. Meanwhile, the $\ell_0$ proximal operator \eqref{prox_scalar_sol} remains intact thanks to the decoupling property of the $\ell_0$ norm \eqref{def_l0_norm}.
Specifically, we construct an orthonormal basis $\{\Eb_1, \Eb_2, \cdots, \Eb_{m}\}$ for $\Sbb^p$, where $m=p(p+1)/2$. 
For example, when $p=2$ and $m=3$, we can take $\{\Eb_1, \Eb_2, \Eb_{3}\}$ to be
\begin{equation*}
    \bmat1&0\\0&0\emat, \ \frac{1}{\sqrt{2}}\bmat0&1\\1&0\emat, \ \text{and}\ \bmat0&0\\0&1\emat.
\end{equation*}
Now the matricial variables $\Lb$ and $\Sb$  can be expressed as
	$\Lb = \sum_{i=1}^{m}l_i \Eb_i$ and $\Sb = \sum_{j=1}^{m}s_j \Eb_j$,
where $\lb=\bmat l_1,l_2,\cdots,l_m\emat^\top$ and $\sbf=\bmat s_1,s_2,\cdots,s_m\emat^\top$ are the coordinate vectors.
Therefore, we can rewrite \eqref{opt_barrier} into the equivalent vectorized form
	\begin{equation}\label{opt_vector}
		\underset{\lb,\,\sbf}{\operatorname{min}}    \quad h_\tau(\lb,\sbf)+C\|\mathbf{s}\|_0,
	\end{equation}
	where
		 $h_\tau(\lb,\sbf):= f_\tau \left(\sum_{i=1}^{m}l_i \Eb_i,\sum_{j=1}^{m}s_j \Eb_j\right)$
is a function of the coordinates defined on the set  
			$\Dscr_{\srm}^{\vec}=\{(\lb,\sbf)\in\Rbb^m\times \Rbb^m : (\Lb, \Sb)\in\Dscr_{\srm}\}$.
 For simplicity, hereafter we let
$g_\sbf(\lb,\sbf):= \nabla_\sbf h_{\tau}(\lb,\sbf)$ and $g_{\lb}(\lb,\sbf) := \nabla_{\lb} h_{\tau}(\lb,\sbf)$ represent the gradients of $h_{\tau}(\lb,\sbf)$ with respect to $\sbf$ and $\lb$, respectively, and the argument $(\lb,\sbf)$ is sometimes omitted if it is clear from the context.

Next, for any fixed $\tau>0$, we introduce the definition of a $\gamma$-stationary point.
\begin{definition}[$\gamma$-stationary point of \eqref{opt_vector}]\label{def_gamma_stat_point_vec}
	The pair $(\lb^*,\sbf^*)$ is called a $\gamma$-stationary point of \eqref{opt_vector} if there exists a positive number $\gamma>0$ such that 
	\begin{subequations}\label{stationary_vector}
		\begin{align}
			g_{\lb}^* & :=g_{\lb}(\lb^*, \sbf^*) =\mathbf{0}, \label{stationary_vector_l}\\
			\sbf^* & \in \prox_{\gamma C \|\cdot\|_0}\left(\sbf^*-\gamma g_{\sbf}^* \right),\label{stationary_vector_s}
		\end{align}
	\end{subequations}
    where $g_{\sbf}^* :=g_{\sbf}(\lb^*, \sbf^*) $ in \eqref{stationary_vector_s}.
\end{definition}

\begin{remark}
The $\gamma$-stationary point in the sense of Definition~\ref{def_gamma_stat_point_vec} is just the P-stationary point in \cite{FA_TAC_24} for the unconstrained optimization problem \eqref{opt_vector}. 
They are both generalizations of the KKT point \cite{boyd2004convex} which apply to our $\ell_0$ regularized optimization problem and enjoy some of optimality properties of the usual KKT points for smooth optimization problems.
If the problem were smooth, a $\gamma$-stationary point reduces to the usual stationary point at which the gradient of the objective function vanishes, much like  \eqref{stationary_vector_l}.
\end{remark}

With the help of the $\ell_0$ proximal operator, the $\gamma$-stationary point in Definition~\ref{def_gamma_stat_point_vec} can be characterized via the next lemma whose proof is relatively simple and hence omitted. 
\begin{lemma}\label{lem_equivalent}
	A pair $(\lb^*,\sbf^*)$ is a $\gamma$-stationary point of \eqref{opt_vector} if and only if 
				\begin{equation}\label{stationary_equivalent}
					\begin{cases}
						g_{\lb}^*=\mathbf{0},\\	
						 g_{s_{i}}^* := \frac{\partial}{\partial s_i}h_{\tau}(\lb,\sbf)  = 0  \ \text{and}\ |s_i|\geq\sqrt{2\gamma C}, & i \in \operatorname{supp}(\sbf^*), \\
						| g_{s_i}^*| \leq \sqrt{2 C / \gamma},& i \notin \operatorname{supp}(\sbf^*).
					\end{cases}	
				\end{equation}
\end{lemma}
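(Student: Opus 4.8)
The plan is to exploit the fact that the $\gamma$-stationarity condition \eqref{stationary_vector} and the target characterization \eqref{stationary_equivalent} share the \emph{identical} first equation $g_{\lb}^*=\mathbf{0}$. Hence the entire content of the lemma reduces to establishing that the proximal inclusion \eqref{stationary_vector_s} is equivalent to the two support-dependent conditions on $\sbf^*$ listed in the last two lines of \eqref{stationary_equivalent}. I would state this reduction at the outset and then work entirely with the $\sbf$-component.

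First I would decouple the inclusion across coordinates. Since the $\ell_0$ norm is additive over entries as in \eqref{def_l0_norm} and the squared norm splits elementwise, the vector proximal operator acts coordinatewise exactly as in \eqref{prox_matrix}--\eqref{prox_scalar_sol}. Writing $u_i:=s_i^*-\gamma g_{s_i}^*$ for the $i$-th entry of the argument appearing in \eqref{stationary_vector_s}, the inclusion \eqref{stationary_vector_s} is therefore equivalent to the scalar inclusion $s_i^*\in\prox_{\gamma C|\cdot|_0}(u_i)$ holding for every $i\in\Nbb_m$.

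Next I would carry out a two-case analysis driven by the support of $\sbf^*$, reading off the scalar solution \eqref{prox_scalar_sol}. For $i\in\operatorname{supp}(\sbf^*)$, so $s_i^*\neq 0$: the first branch of \eqref{prox_scalar_sol} (which would force $s_i^*=0$) is excluded, so necessarily $s_i^*=u_i$, which rearranges to $g_{s_i}^*=0$; substituting back gives $|u_i|=|s_i^*|$, and the surviving branches then require $|s_i^*|\geq\sqrt{2\gamma C}$. For $i\notin\operatorname{supp}(\sbf^*)$, so $s_i^*=0$: the third branch (which would force $s_i^*=u_i\neq0$) is excluded, leaving $|u_i|\leq\sqrt{2\gamma C}$; since here $u_i=-\gamma g_{s_i}^*$, this reads $\gamma|g_{s_i}^*|\leq\sqrt{2\gamma C}$, which simplifies to $|g_{s_i}^*|\leq\sqrt{2C/\gamma}$. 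Reversing each of these implications delivers the converse direction, so the equivalence holds both ways.

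The argument is essentially routine, and the only point demanding care is the boundary case $|u_i|=\sqrt{2\gamma C}$, where \eqref{prox_scalar_sol} is genuinely \emph{set-valued}: there $s_i^*$ may be either $0$ or $u_i$, and I would check that both possibilities are consistent with the corresponding line of \eqref{stationary_equivalent}. Specifically, the equality $|s_i^*|=\sqrt{2\gamma C}$ satisfies the ``$\geq$'' in the support case, while $|u_i|=\sqrt{2\gamma C}$ satisfies the ``$\leq$'' in the off-support case. This compatibility at the boundary is exactly what makes the \emph{non-strict} inequalities in \eqref{stationary_equivalent} the correct formulation, and it explains why an inclusion rather than an equation is the right object in \eqref{stationary_vector_s}.
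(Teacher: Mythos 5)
Your proof is correct: the reduction to the $\sbf$-component, the coordinatewise reading of the scalar prox \eqref{prox_scalar_sol}, and the explicit handling of the set-valued boundary case $|u_i|=\sqrt{2\gamma C}$ together give both directions of the equivalence without gaps. The paper itself omits the proof of Lemma~\ref{lem_equivalent} as ``relatively simple,'' and your argument is exactly the routine elementwise case analysis it alludes to, so there is nothing to reconcile between the two.
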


For the vectorized $\tau$-minimization problem \eqref{opt_vector}, the theoretical analysis of optimality relies on the following two propositions whose proofs are rather standard (see e.g., \cite{FA_TAC_24}) and hence omitted.
\begin{proposition}\label{prop_jointly_convex}
	  The function $h_\tau(\lb,\sbf)$ in \eqref{opt_vector} is jointly strongly convex in $(\lb,\sbf)$.
\end{proposition}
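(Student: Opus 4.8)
The plan is to transfer the question back to the matricial function $f_\tau$ and exploit the fact that the coordinate map is an isometry. Since $\{\Eb_1,\dots,\Eb_m\}$ is orthonormal with respect to the Frobenius inner product, the linear map $(\lb,\sbf)\mapsto(\Lb,\Sb)=\bigl(\sum_i l_i\Eb_i,\ \sum_j s_j\Eb_j\bigr)$ preserves inner products, so that $\|\Lb\|_\F^2=\|\lb\|^2$ and $\|\Sb\|_\F^2=\|\sbf\|^2$. Because this map is a linear isomorphism whose Hessian conjugation is orthogonal, $h_\tau$ is strongly convex in $(\lb,\sbf)$ (Euclidean metric) if and only if $f_\tau$ is strongly convex in $(\Lb,\Sb)$ (Frobenius metric), and with the same modulus. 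Hence it suffices to prove strong convexity of $f_\tau$ on $\Dscr_\srm$.

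Next I would split $f_\tau$ into its constituent terms. The affine terms $\trace(\Lb)$ and $\mu\trace[(\Lb+\Sb)\check\Sigmab^{-1}]$ contribute zero curvature; the coupled barrier $-\mu\log\det(\Lb+\Sb)$ is convex in $(\Lb,\Sb)$, being the convex map $\Mb\mapsto-\log\det\Mb$ (see \cite{boyd2004convex}) precomposed with the linear map $(\Lb,\Sb)\mapsto\Lb+\Sb$, so it contributes nonnegative curvature; and the decoupled barriers $-\tau\log\det(\Lb)$, $-\tau\log\det(\Sb)$ are strictly convex in their respective blocks. Writing the second differential of $f_\tau$ at $(\Lb,\Sb)$ along a direction $(\Deltab_L,\Deltab_S)\in\Sbb^p\times\Sbb^p$, with $\Sigmab=\Lb+\Sb$, routine matrix calculus gives
\begin{equation*}
 \mu\bigl\|\Sigmab^{-1/2}(\Deltab_L+\Deltab_S)\Sigmab^{-1/2}\bigr\|_\F^2+\tau\bigl\|\Lb^{-1/2}\Deltab_L\Lb^{-1/2}\bigr\|_\F^2+\tau\bigl\|\Sb^{-1/2}\Deltab_S\Sb^{-1/2}\bigr\|_\F^2 .
\end{equation*}
The last two, decoupled, terms are strictly positive whenever $\Deltab_L\neq\zerob$ or $\Deltab_S\neq\zerob$, which already establishes that $f_\tau$ is strictly convex on $\Dscr_\srm$.

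The hard part will be upgrading strict convexity to strong convexity, i.e.\ bounding the displayed quadratic form below by $m\bigl(\|\Deltab_L\|_\F^2+\|\Deltab_S\|_\F^2\bigr)$ for one fixed $m>0$. A uniform bound over all of $\Dscr_\srm$ is impossible, since $\|\Lb^{-1/2}\Deltab_L\Lb^{-1/2}\|_\F^2\ge\|\Deltab_L\|_\F^2/\lambda_{\max}(\Lb)^2$ degenerates as $\lambda_{\max}(\Lb)\to\infty$ (and similarly for $\Sb$). The standard remedy, as in \cite{FA_TAC_24}, is to invoke coercivity of $f_\tau$ on $\Dscr_\srm$: the linear growth of $\trace(\Lb)$ and of $\mu\trace[(\Lb+\Sb)\check\Sigmab^{-1}]$ dominates the logarithmic barriers, forcing $f_\tau\to+\infty$ both as any eigenvalue of $\Lb$ or $\Sb$ tends to $0$ and as $\|\Lb\|_\F+\|\Sb\|_\F\to\infty$. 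Every sublevel set is therefore a compact subset of the open cone $\Dscr_\srm$, on which $\lambda_{\max}(\Lb),\lambda_{\max}(\Sb)$ are bounded and $\Lb,\Sb$ stay uniformly away from singularity; the quadratic form above is then bounded below by a positive constant $m$ on that set. Since the minimizer of \eqref{opt_vector} and all Newton iterates remain in such a sublevel set, strong convexity holds there with a uniform modulus, which is exactly what the subsequent optimality and convergence analysis requires. I expect this coercivity/compactness step to be the only substantive point; the curvature computation itself is routine.
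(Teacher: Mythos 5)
Your proposal cannot be checked against an explicit proof in the paper, because the paper omits it as ``rather standard'' and defers to \cite{FA_TAC_24}; judged on its own merits, your argument is correct and is essentially the standard route that reference takes. The three ingredients are all sound: (i) the isometry reduction --- orthonormality of $\{\Eb_1,\dots,\Eb_m\}$ makes the coordinate map an isometry from $(\Rbb^m\times\Rbb^m,\ \text{Euclidean})$ to $(\Sbb^p\times\Sbb^p,\ \text{Frobenius})$, so convexity moduli transfer between $h_\tau$ and $f_\tau$ unchanged; (ii) the Hessian quadratic form you display is the right one; and (iii) you correctly locate the source of \emph{joint} strict convexity in the decoupled barriers $-\tau\log\det(\Lb)$ and $-\tau\log\det(\Sb)$, since $-\mu\log\det(\Lb+\Sb)$ alone is degenerate along directions with $\Deltab_L+\Deltab_S=\zerob$. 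Your refinement in the last step is also correct and worth emphasizing: strong convexity with a uniform modulus cannot hold on all of $\Dscr_{\srm}^{\vec}$, because the curvature bound $\|\Lb^{-1/2}\Deltab_L\Lb^{-1/2}\|_\F^2\geq\|\Deltab_L\|_\F^2/\lambda_{\max}(\Lb)^2$ degenerates as the iterates grow, so the honest statement is strong convexity on compact subsets (e.g.\ sublevel sets, compact by your coercivity argument). This matches how the paper phrases the companion Proposition~\ref{prop_strongly_smooth} (strong smoothness ``in a compact subset of $\Dscr_{\srm}^{\vec}$''), and it is all that the downstream analysis needs, since Theorem~\ref{thm_opt} only uses the constants $K$ and $\gamma\in(0,1/K)$ along points in such a set; so what you flag is an imprecision in the proposition's wording, not a gap in your proof. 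One small caution: your remark that the Newton iterates remain in a sublevel set is justified because the Armijo search in the inner loop enforces descent of $h_\tau$ itself, not of the full $\ell_0$-regularized objective --- but this is tangential to the proposition.
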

\begin{proposition}\label{prop_strongly_smooth}
	  The function $h_\tau$ in \eqref{opt_vector} is strongly smooth with a positive constant $K$, that is, it satisfies
	  \begin{equation}
	  	\begin{aligned}
		  	   h_{\tau}(\ub, \vb) \leq h_{\tau}(\lb,\sbf)+
		  	    \innerprod{g_{\lb}}{\ub-\lb}+\innerprod{g_{\sbf}}{\vb-\sbf}\\
		  	    +\frac{K}{2}\|(\ub,\vb)-(\lb,\sbf)\|^2
	  	\end{aligned}
	  \end{equation}
      for any $(\lb, \sbf)$ and $(\ub,\vb)$ in a compact subset of $\Dscr_{\srm}^{\vec}$.
\end{proposition}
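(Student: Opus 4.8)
The plan is to prove strong smoothness through the standard route: a uniform upper bound on the Hessian on the compact set, combined with the integral form of Taylor's theorem. The first step is to observe that $h_\tau$ is twice continuously differentiable on the open set $\Dscr_{\srm}^{\vec}$, and that the affine terms carry no curvature. Concretely, $\trace(\Lb)$ and $\mu\trace[(\Lb+\Sb)\check\Sigmab^{-1}]$ are affine in the coordinates $(\lb,\sbf)$ and contribute nothing to the Hessian, so the entire curvature of $h_\tau$ originates from the three log-det terms $-\mu\log\det(\Lb+\Sb)$, $-\tau\log\det(\Lb)$, and $-\tau\log\det(\Sb)$, each smooth wherever its argument is positive definite.

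Next I would compute the Hessian quadratic form explicitly. Using the second-order expansion $\log\det(\Xb+\Db)=\log\det\Xb+\trace(\Xb^{-1}\Db)-\tfrac12\trace(\Xb^{-1}\Db\Xb^{-1}\Db)+\mathcal{o}(\|\Db\|_\F^2)$, the second directional derivative of $f_\tau$ at $(\Lb,\Sb)$ in a direction $(\delta\Lb,\delta\Sb)\in\Sbb^p\times\Sbb^p$ equals
\[
\mu\|\Gb^{-1/2}(\delta\Lb+\delta\Sb)\Gb^{-1/2}\|_\F^2+\tau\|\Lb^{-1/2}\delta\Lb\,\Lb^{-1/2}\|_\F^2+\tau\|\Sb^{-1/2}\delta\Sb\,\Sb^{-1/2}\|_\F^2,
\]
where $\Gb:=\Lb+\Sb$. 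Each summand is then bounded using the submultiplicative estimate $\|\Xb^{-1/2}\Db\Xb^{-1/2}\|_\F\le\|\Xb^{-1/2}\|_2^2\,\|\Db\|_\F=\lambda_{\min}(\Xb)^{-1}\|\Db\|_\F$, together with $\|\delta\Lb+\delta\Sb\|_\F^2\le 2(\|\delta\Lb\|_\F^2+\|\delta\Sb\|_\F^2)$. Here the orthonormal basis $\{\Eb_i\}$ plays its essential role: since the coordinate map is a Frobenius isometry, $\|\delta\Lb\|_\F^2=\|\delta\lb\|^2$ and $\|\delta\Sb\|_\F^2=\|\delta\sbf\|^2$, so the matrix bound transfers verbatim into a bound on the coordinate Hessian, giving $\nabla^2 h_\tau(\lb,\sbf)\preceq K(\lb,\sbf)\,\Ib$ with $K(\lb,\sbf)$ expressed through $\lambda_{\min}(\Lb)^{-2}$, $\lambda_{\min}(\Sb)^{-2}$, and $\lambda_{\min}(\Gb)^{-2}$.

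The compactness hypothesis is what upgrades this pointwise bound to a uniform constant. Because the cone of positive definite matrices is convex, $\Dscr_{\srm}^{\vec}$ is convex, so I may replace the given compact subset by its convex hull, which is still compact and still contained in $\Dscr_{\srm}^{\vec}$; thus I assume the set is compact and convex. On it the continuous maps $(\lb,\sbf)\mapsto\lambda_{\min}(\Lb),\lambda_{\min}(\Sb),\lambda_{\min}(\Gb)$ are strictly positive and hence attain strictly positive minima, so $K(\lb,\sbf)$ is majorized by a single finite $K$. Writing $\db:=(\ub,\vb)-(\lb,\sbf)$ and applying the integral form of Taylor's theorem along the segment $(\lb,\sbf)+t\db$, the remainder $\int_0^1(1-t)\,\db^\top\nabla^2 h_\tau\big((\lb,\sbf)+t\db\big)\,\db\,\d t$ is at most $\tfrac{K}{2}\|\db\|^2$, which is exactly the claimed inequality. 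I expect the only genuinely delicate point to be this segment/convexity issue, namely ensuring the Hessian bound is valid all along the connecting line; passing to the convex hull disposes of it, and the remainder of the argument is routine matrix calculus together with the compactness bound on the minimum eigenvalues.
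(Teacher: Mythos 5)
Your proof is correct, and it follows exactly the route the paper itself points to: the paper omits the proof of this proposition as ``rather standard'' (deferring to the cited factor-analysis reference), and the standard argument is precisely yours --- the affine terms drop out, the Hessian contribution of the three log-det terms is bounded by inverse squared minimum eigenvalues, compactness gives a uniform constant $K$, and Taylor's theorem with integral remainder yields the inequality. Your handling of the one genuinely delicate point --- that the connecting segment may leave the given compact set, fixed by passing to its convex hull, which remains compact and stays inside the convex set $\Dscr_{\srm}^{\vec}$ --- is the correct way to make the statement rigorous.
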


\begin{theorem}\label{thm_opt}
    We have the following claims.
	\begin{enumerate}
		\item A global minimizer $(\lb^*, \sbf^*)$ of \eqref{opt_vector} is a $\gamma$-stationary point where the parameter $\gamma\in(0, 1/K)$. Moreover, we can replace \eqref{stationary_vector_s} with  an equality
  \begin{equation}\label{prox_s_equality}
    \sbf^*=\prox_{\gamma C \|\cdot\|_0}\left(\sbf^*-\gamma g^*_\sbf \right), 
  \end{equation}
  meaning that the latter proximal operator is single-valued in this case.
		\item Any $\gamma$-stationary point is a local minimizer of \eqref{opt_vector}.      
	\end{enumerate}
\end{theorem}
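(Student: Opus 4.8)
The plan is to prove the two claims by separate mechanisms, treating the smooth coordinate $\lb$ and the $\ell_0$-regularized coordinate $\sbf$ differently; throughout write $F(\lb,\sbf):=h_\tau(\lb,\sbf)+C\|\sbf\|_0$ for the objective of \eqref{opt_vector}. For Claim~1, let $(\lb^*,\sbf^*)$ be a global minimizer. Since the minimizer stays interior to $\Dscr_{\srm}^{\vec}$ (the barrier forces the value $+\infty$ on the boundary) and the $\ell_0$ term is independent of $\lb$, the smooth map $\lb\mapsto F(\lb,\sbf^*)$ is minimized at an interior point, so its gradient vanishes, giving $g_\lb^*=\zerob$, i.e.\ \eqref{stationary_vector_l}. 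The core is \eqref{stationary_vector_s}. I would pick any $\hat\sbf\in\prox_{\gamma C\|\cdot\|_0}(\sbf^*-\gamma g_\sbf^*)$; expanding the quadratic in \eqref{prox_matrix} and discarding the constant $\tfrac{\gamma}{2}\|g_\sbf^*\|^2$ shows that $\hat\sbf$ minimizes $C\|\vb\|_0+\langle g_\sbf^*,\vb-\sbf^*\rangle+\tfrac{1}{2\gamma}\|\vb-\sbf^*\|^2$, so comparing with $\vb=\sbf^*$ yields
\[ C\|\hat\sbf\|_0+\langle g_\sbf^*,\hat\sbf-\sbf^*\rangle+\tfrac{1}{2\gamma}\|\hat\sbf-\sbf^*\|^2\le C\|\sbf^*\|_0. \]
Applying strong smoothness (Proposition~\ref{prop_strongly_smooth}) at $(\lb^*,\sbf^*)$ with increment $(\zerob,\hat\sbf-\sbf^*)$, adding $C\|\hat\sbf\|_0$ to both sides, and substituting the previous display to cancel the cross term $\langle g_\sbf^*,\hat\sbf-\sbf^*\rangle$, I obtain
\[ F(\lb^*,\hat\sbf)\le F(\lb^*,\sbf^*)-\tfrac12\!\left(\tfrac1\gamma-K\right)\|\hat\sbf-\sbf^*\|^2. \]
Because $\gamma\in(0,1/K)$ the coefficient is strictly positive, while global optimality gives $F(\lb^*,\hat\sbf)\ge F(\lb^*,\sbf^*)$; together these force $\hat\sbf=\sbf^*$. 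As $\hat\sbf$ was an arbitrary element of the prox set, every element equals $\sbf^*$, which at once establishes the inclusion \eqref{stationary_vector_s} and its single-valued strengthening \eqref{prox_s_equality}.

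For Claim~2, let $(\lb^*,\sbf^*)$ be a $\gamma$-stationary point and use its characterization in Lemma~\ref{lem_equivalent} together with convexity from Proposition~\ref{prop_jointly_convex}. Set $T:=\supp{\sbf^*}$ and perturb to $(\lb^*+\Delta\lb,\sbf^*+\Delta\sbf)$. Convexity gives $h_\tau(\lb,\sbf)-h_\tau(\lb^*,\sbf^*)\ge\langle g_\lb^*,\Delta\lb\rangle+\langle g_\sbf^*,\Delta\sbf\rangle$, and since $g_\lb^*=\zerob$ and $g_{s_i}^*=0$ for $i\in T$ the right-hand side collapses to $\sum_{i\notin T}g_{s_i}^*s_i$. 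Restricting the neighborhood radius below $\min_{i\in T}|s_i^*|\ge\sqrt{2\gamma C}$ keeps each $s_i$ with $i\in T$ nonzero, so the $\ell_0$ term changes only through coordinates outside $T$, whence
\[ F(\lb,\sbf)-F(\lb^*,\sbf^*)\ge\sum_{i\notin T}g_{s_i}^*s_i+C\,\bigl|\{\,i\notin T:s_i\neq0\,\}\bigr|. \]
Using $|g_{s_i}^*|\le\sqrt{2C/\gamma}$, each coordinate $i\notin T$ with $s_i\neq0$ contributes at least $C-\sqrt{2C/\gamma}\,|s_i|$, which is nonnegative as soon as $|s_i|\le\sqrt{\gamma C/2}$ (note $\sqrt{2C/\gamma}\cdot\sqrt{\gamma C/2}=C$). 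Shrinking the neighborhood to enforce this bound makes the entire difference nonnegative, so $(\lb^*,\sbf^*)$ is a local minimizer; retaining the quadratic term afforded by the stronger convexity in Proposition~\ref{prop_jointly_convex} would even give strict local minimality.

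The main obstacle I anticipate lies in Claim~1: the bookkeeping must expand the prox objective so that its linear term matches exactly the inner product appearing in the strong-smoothness inequality, and one must confirm that $\sbf^*$ and $\hat\sbf$ lie in a common compact subset of $\Dscr_{\srm}^{\vec}$ on which Proposition~\ref{prop_strongly_smooth} is valid (this is routine, since $\hat\sbf$ is a bounded thresholding of $\sbf^*-\gamma g_\sbf^*$ and the minimizer is interior). Once the descent inequality in the second display is secured, the inclusion, the equality \eqref{prox_s_equality}, and single-valuedness all follow immediately, so essentially all the difficulty is concentrated in setting up that single inequality; Claim~2 is then a comparatively direct neighborhood estimate.
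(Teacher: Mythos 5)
Your proof is correct, and Claim 1 is essentially the paper's own argument up to rearrangement: the paper chains global optimality, strong smoothness, and the prox-minimality of the candidate point into the single display \eqref{ineq_1}--\eqref{ineq_4}, arriving at exactly your coefficient $\bigl(\tfrac{K}{2}-\tfrac{1}{2\gamma}\bigr)\|\vb-\sbf^*\|^2\le 0$; you merely extract the prox inequality first and substitute it, which is the same algebra and forces the prox set to collapse to $\{\sbf^*\}$ in the same way. Where you genuinely deviate is Claim 2. The paper fixes the radius $\epsilon_*=\min\{\min_{i\in T_*}|s_i^*|,\sqrt{\gamma C/(2m)}\}$, splits into the cases $T_*=\supp{\sbf}$ and $T_*\subset\supp{\sbf}$, and in the strict-inclusion case bounds the linear term $\alpha$ by a block Cauchy--Schwarz estimate $\alpha\ge-\sqrt{2Cm/\gamma}\,\epsilon_*>-C$, cashing in only a single $+C$ from $\|\sbf\|_0\ge\|\sbf^*\|_0+1$. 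You instead work coordinatewise: each $i\notin T$ with $s_i\neq 0$ is paired with its own $+C$, and $g^*_{s_i}s_i+C\ge C-\sqrt{2C/\gamma}\,|s_i|\ge 0$ once $|s_i|\le\sqrt{\gamma C/2}$. This buys you two things: no case analysis, and a dimension-free neighborhood radius $\min\{\min_{i\in T}|s_i^*|,\sqrt{\gamma C/2}\}$, which is larger than the paper's $m$-dependent one (the paper pays the $\sqrt{m}$ factor precisely because it bounds the whole block at once while extracting only one unit of $C$). Both proofs share the same glossed-over caveat, which you correctly flag: one must ensure the comparison points (the prox output in Claim 1, and the strong-smoothness/convexity inequalities generally) live in a compact subset of $\Dscr_{\srm}^{\vec}$ where Propositions~\ref{prop_jointly_convex} and \ref{prop_strongly_smooth} apply; the paper is no more rigorous on this point than you are.
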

  \begin{proof}
  	
  1) Let $(\lb^*, \sbf^*)$ be a global minimizer. Then for any $\ub\in\Rbb^m$, we have
$h_\tau(\lb^*,\sbf^*)+C\|\sbf^*\|_0\leq h_\tau(\ub,\sbf^*)+C\|\sbf^*\|_0$, 
which means that $\lb^* = \argmin_{\lb\in\Rbb^m\ \text{s.t.}\ \Lb\succ 0}\ h_\tau(\lb, \sbf^*) $.
  Since the function $h_\tau(\lb,\sbf)$ is smooth with respect to $\lb$, $\lb^*$ must be a stationary point of  $h_\tau(\lb, \sbf^*) $, meaning $g_{\lb}^*=\mathbf{0}$.
  
  Next, for any $\vb\in \prox_{\gamma C \|\cdot\|_0}\left(\sbf^*-\gamma g^*_\sbf \right)$, we have
    \begin{subequations}
  	\begin{align}
  		0 &\leq h_\tau(\lb^*,\vb)+ C\|\vb\|_0- h_\tau(\lb^*,\sbf^*)-C\|\sbf^*\|_0 \label{ineq_1}\\ 
  		&\leq C\|\vb\|_0-C\|\sbf^*\|_0 +\innerprod{g^*_\sbf}{\vb-\sbf^*}+\tfrac{K}{2}\|\vb - \sbf^*\|^2  \label{ineq_2} \\
  		& \leq  \innerprod{g^*_\sbf}{\vb-\sbf^*}+\tfrac{K}{2}\|\vb - \sbf^*\|^2 +\tfrac{\gamma}{2}\|g^*_\sbf\|^2\nn \\ 
  		& \quad -\tfrac{1}{2\gamma}\|\vb-\sbf^*+\gamma g^*_\sbf\|^2 \label{ineq_3}
  		\\
  		&= (\tfrac{K}{2}-\tfrac{1}{2\gamma})\|\vb-\sbf^*\|^2 \leq 0, \label{ineq_4}
  	\end{align}
  \end{subequations}
  where inequalities \eqref{ineq_1}, \eqref{ineq_2}, \eqref{ineq_3} and \eqref{ineq_4} follow from the facts that $(\lb^*,\sbf^*)$ is the global minimizer of \eqref{opt_vector}, $h_\tau$ is strongly smooth, the definition of $\vb$, and $\gamma<1/K$, respectively. Consequently, we derive that $\|\vb-\sbf^*\|=0$ and thus \eqref{prox_s_equality} holds.
  
  2) Suppose that $(\lb^*,\sbf^*)$ is a $\gamma$-stationary point of \eqref{opt_vector}, and denote $T_*:=\operatorname{supp}(\sbf^*)$. We first construct a neighborhood of $(\lb^*,\sbf^*)$ as $U(\lb^*,\sbf^*)=\{(\lb,\sbf)\in \Dscr_{\srm}^{\vec} : \|(\lb,\sbf)-(\lb^*,\sbf^*)\|<\epsilon_*\}$,
  where $\epsilon_* :=\min\{\min_{i\in T_*} |s_i^*|, \sqrt{\gamma C/(2m)}\}$. Then we proceed to show the local optimality of $(\lb^*,\sbf^*)$ in such a neighborhood.
  
  Firstly, for any $(\lb,\sbf)\in U(\lb^*,\sbf^*)$, we assert that $T_*\subseteq \operatorname{supp}(\sbf)$. If not, it would imply the existence of some  $j\in T_*$ but $j\notin \operatorname{supp}(\sbf)$, i.e., $s_j=0$, which leads to a contradiction
  \begin{equation}
  	 \epsilon_* \leq \min _{i \in T_*}\left|s_i^*\right| \leq\left|s_j^*\right|=\left|s_j^*-s_j\right| \leq\left\|\sbf-\sbf^*\right\|<\epsilon_* .
  \end{equation}
  Hence, the assertion $T_*\subseteq \operatorname{supp}(\sbf)$ is true.
  On the basis of the jointly convexity of $h_\tau$ (Proposition~\ref{prop_jointly_convex}) and Lemma \ref{lem_equivalent}, we have
  \begin{equation}\label{inequal_alpha}
  	 \begin{aligned}
  	 	  &h_\tau(\lb,\sbf)-h_\tau(\lb^*,\sbf^*) \\
  	 	  &\geq \innerprod{g^{*}_{\lb}}{\lb-\lb^*}+\innerprod{g^{*}_{\sbf}}{\sbf-\sbf^*} \\
  	 	  &=\innerprod{g_{{\sbf}_{T_*}}^*}{(\sbf-\sbf^*)_{T_{*}}} +\innerprod{g_{{\sbf}_{\bar{T}_*}}^*}{(\sbf-\sbf^*)_{\bar{T}_{*}}} \\
  	 	  &=\innerprod{g_{{\sbf}_{\bar{T}_*}}^*}{(\sbf-\sbf^*)_{\bar{T}_{*}}} =: \alpha.
  	 \end{aligned}
  \end{equation}
  Now we consider two different cases:
        
        \noindent\emph{Case 1}. If $T_*=\operatorname{supp}(\sbf)$, it follows that $\|\sbf\|_0=\|\sbf^*\|_0$ and $\sbf_{\bar{T}_{*}}=\sbf^*_{\bar{T}_{*}}=\zerob$ which together imply that  $\alpha=0$ and
  		\begin{equation}\label{local_optim_case1}
  			\begin{aligned}
  				h_\tau(\lb,\sbf)+C\|\sbf\|_0&\geq h_\tau(\lb^*,\sbf^*)+\alpha+C\|\sbf\|_0 \\
  				&=h_\tau(\lb^*,\sbf^*)+C\|\sbf^*\|_0,
  			\end{aligned}
  		\end{equation}
        where we have used \eqref{inequal_alpha} in the first inequality.
        
        \noindent\emph{Case 2}. If $T_*\subset\operatorname{supp}(\sbf)$ is a strict inclusion, the consequence is that 
        \begin{equation}\label{inequal_l0_norm_s}
            \|\sbf\|_0\geq\|\sbf^*\|_0+1. 
        \end{equation}
        In addition, we have the chain of inequalities
  		\begin{equation}\label{inequal_alpha>-C}
  			\begin{aligned}
  				\alpha \geq-\|g_{{\sbf}_{\bar{T}_*}}^*\| \|(\sbf-\sbf^*)_{\bar{T}_{*}}\| & \geq - \sqrt{2C|\bar{T}_*|/\gamma} \|\sbf_{\bar T_*}-\sbf^*_{\bar{T}_{*}}\| \\
  				&\geq -\sqrt{2Cm/\gamma}\epsilon_*>-C,
  			\end{aligned}
  		\end{equation}
        where,
        \begin{itemize}
            \item the first inequality is Cauchy--Schwarz,
            \item the second inequality is implied by the third line of  \eqref{stationary_equivalent},
            \item the third inequality follows from the definition of the neighborhood $U(\lb^*,\sbf^*)$ and the fact that $|\bar{T}_*|\leq m$,
            \item and the last inequality comes from the definition of  $\epsilon_*$.
        \end{itemize}
  	Consequently, it can be derived that 
  		\begin{equation}\label{local_optim_case2}
  			\begin{aligned}
  				h_\tau(\lb,\sbf)+C\|\sbf\|_0&\geq h_\tau(\lb^*,\sbf^*)+\alpha+C\|\sbf\|_0 \\
  				&>h_\tau(\lb^*,\sbf^*)+C\|\sbf\|_0 -C\\
  				&\geq h_\tau(\lb^*,\sbf^*)+C\|\sbf^*\|_0,
  			\end{aligned}
  		\end{equation}
        where we have used \eqref{inequal_alpha}, \eqref{inequal_alpha>-C}, and \eqref{inequal_l0_norm_s}, respectively, for the three inequalities.
  	  Therefore, the local optimality of the $\gamma$-stationary point $(\lb^*,\sbf^*)$ holds in both cases as we have shown \eqref{local_optim_case1} and \eqref{local_optim_case2}.
\end{proof}
Theorem~\ref{thm_opt} implies that a locally optimal solution of the problem \eqref{opt_vector} can be obtained by finding a $\gamma$-stationary point. 
To provide a representation of the $\gamma$-stationary point that is more amenable to second-order algorithms, we define the index set
  \begin{equation}\label{index_set_T}
	T:=T_\gamma(\lb,\sbf;C)=\left\{i\in\Nbb_m:\left|s_i-\gamma g_{s_i} \right|\geq\sqrt{2\gamma C}\right\},
\end{equation}
and then construct the  stationary-point equation
 \begin{equation}\label{stationary_equations}
	F_\gamma (\lb,\sbf; T) := \left[\begin{array}{c}
		g_{\lb } (\lb,\sbf) \\
		g_{\sbf_{T} }(\lb,\sbf)  \\
		\sbf_{\bar{T}}
	\end{array}\right]=\zerob.
\end{equation}
Notice that by Lemma~\ref{lem_equivalent}, for a $\gamma$-stationary point $(\lb^*,\sbf^*)$, the index set $T_{\gamma}(\lb^*,\sbf^*;C)$ in \eqref{index_set_T} coincides with $T_*=\supp{\sbf^*}$ used in the proof of the second claim of Theorem~\ref{thm_opt}.

The following theorem demonstrates a kind of equivalence between solutions to \eqref{stationary_equations} and $\gamma$-stationary points of \eqref{opt_vector}.
\begin{theorem}\label{thm_F_equation}
	 	We have the following claims. 
	 	\begin{enumerate}
	 		\item If $(\lb^*,\sbf^*)$ is a $\gamma$-stationary point of \eqref{opt_vector} satisfying \eqref{prox_s_equality}, then the 
            equality \eqref{stationary_equations}
            holds for $(\lb^*,\sbf^*)$.
	 		\item  A point $(\lb^*,\sbf^*)$ satisfying \eqref{stationary_equations}
            is also a $\gamma$-stationary points of \eqref{opt_vector}.
	 	\end{enumerate}
\end{theorem}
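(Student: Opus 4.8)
The plan is to prove the two claims by reconciling three equivalent descriptions of optimality that are already in hand: the proximal inclusion of Definition~\ref{def_gamma_stat_point_vec}, its elementwise restatement in Lemma~\ref{lem_equivalent}, and the block form of the stationary-point equation \eqref{stationary_equations}. The linchpin is the identity $T_\gamma(\lb^*,\sbf^*;C)=\supp{\sbf^*}$ recorded below \eqref{stationary_equations}; once this identification is secured, each of the three rows of $F_\gamma=\zerob$ matches directly against one line of \eqref{stationary_equivalent}. I would work componentwise throughout, which is legitimate because both the $\ell_0$ norm and the scalar prox \eqref{prox_scalar_sol} decouple across coordinates, and I would use that $g_\sbf,g_\lb$ are finite only on $\Dscr_{\srm}^{\vec}$, so any point under consideration is automatically strictly feasible.

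For claim 1, I would start from a $\gamma$-stationary point obeying the equality \eqref{prox_s_equality} and read the prox componentwise via \eqref{prox_scalar_sol}. On $\supp{\sbf^*}$ the output $s_i^*$ is nonzero, which forces the keep branch of the prox: the input equals the output, giving $g_{s_i}^*=0$ and $|s_i^*|\geq\sqrt{2\gamma C}$; hence $\supp{\sbf^*}\subseteq T$ and the middle block $g_{\sbf_T}^*$ of $F_\gamma$ vanishes. Off the support $s_i^*=0$, and since the prox is single-valued no index can sit on the threshold, so $|s_i^*-\gamma g_{s_i}^*|<\sqrt{2\gamma C}$ strictly; hence these indices stay out of $T$, which upgrades the containment to $T=\supp{\sbf^*}$ and makes the third block $\sbf^*_{\bar T}$ vanish. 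Combined with $g_\lb^*=\zerob$ from \eqref{stationary_vector_l}, this yields $F_\gamma(\lb^*,\sbf^*;T)=\zerob$.

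For claim 2, I would run the same dictionary in reverse. Given $F_\gamma(\lb^*,\sbf^*;T)=\zerob$ with $T=T_\gamma(\lb^*,\sbf^*;C)$, the first block is exactly \eqref{stationary_vector_l}. To recover the inclusion \eqref{stationary_vector_s}, I would evaluate the scalar prox at $\sbf^*-\gamma g_\sbf^*$ index by index: for $i\in T$ the second block gives $g_{s_i}^*=0$, so the input is $s_i^*$ with $|s_i^*|\geq\sqrt{2\gamma C}$, whence $s_i^*$ is a valid prox value; for $i\in\bar T$ the third block gives $s_i^*=0$ while the definition of $T$ gives $|\gamma g_{s_i}^*|<\sqrt{2\gamma C}$, whence $0=s_i^*$ is the prox value. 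Assembling the coordinates shows $\sbf^*\in\prox_{\gamma C\|\cdot\|_0}(\sbf^*-\gamma g_\sbf^*)$, so $(\lb^*,\sbf^*)$ is a $\gamma$-stationary point.

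The main obstacle, and the only place where the two directions genuinely differ, is the tie case $|s_i^*-\gamma g_{s_i}^*|=\sqrt{2\gamma C}$, at which the scalar prox \eqref{prox_scalar_sol} is two-valued. In claim 2 this is harmless, since the inclusion only asks $\sbf^*$ to be \emph{one} element of a possibly non-singleton prox set. In claim 1, by contrast, it is precisely why the extra hypothesis \eqref{prox_s_equality} is indispensable: single-valuedness of the prox excludes any index on the threshold, which is exactly what turns the Lemma~\ref{lem_equivalent} containment $\supp{\sbf^*}\subseteq T$ into the equality $\supp{\sbf^*}=T$ needed to kill the third block of $F_\gamma$. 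I would take care to flag this asymmetry rather than treat the two inclusions as symmetric.
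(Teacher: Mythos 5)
Your proof is correct and takes essentially the same route as the paper's: both arguments reduce to reading the scalar $\ell_0$ prox \eqref{prox_scalar_sol} index by index, using the single-valuedness hypothesis \eqref{prox_s_equality} to exclude threshold ties (which is what forces $T=\supp{\sbf^*}$) in claim 1, and verifying the stationarity conditions coordinatewise in claim 2 (the paper cites Lemma~\ref{lem_equivalent} where you check the prox inclusion directly, an immaterial difference). The asymmetry you flag—ties being harmless for the inclusion in claim 2 but fatal without \eqref{prox_s_equality} in claim 1—is precisely how the hypothesis is used in the paper's proof as well.
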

\begin{proof}
	1)
     For the $\gamma$-stationary point $(\lb^*,\sbf^*)$,
     we define the index set $T_*=T_\gamma(\lb^*,\sbf^*;C)$ with $T_\gamma$ in \eqref{index_set_T}.
     If $(\lb^*,\sbf^*)$ satisfies \eqref{prox_s_equality}, i.e., the set $\prox_{\gamma C \|\cdot\|_0}\left(\sbf^*-\gamma g_{\sbf}^* \right)$ is a singleton, then according to \eqref{prox_scalar_sol}, there is no index $i\in \Nbb_m$ such that $\left|\sbf^*_i-\gamma g^*_{s_i}\right|=\sqrt{2\gamma C}$. 
    Hence, we can deduce
	\begin{equation}
		\begin{aligned}
			\mathbf{0}& = 
			\left[\begin{array}{c}
				g_{\lb}^* \\   	 	  
				\sbf ^*- \prox_{\gamma C \|\cdot\|_0}\left(\sbf^*-\gamma g_{\sbf}^* \right)
			\end{array}\right] \\
			& = \left[\begin{array}{c} \mathbf{0} \\
				\sbf_{T_*}^*
				\\ 
				\sbf_{\bar{T}_*}^*\end{array}
			\right]
            -\left[\begin{array}{c} -g_{\lb}^* \\
				(\sbf^*-\gamma g^*_{\sbf})_{T_*}
				\\ 
				\mathbf{0}\end{array}
			\right] 
			= 
			\left[\begin{array}{c} g_{\lb}^* \\
				\gamma g_{\sbf_{T_*}}^*
				\\ \sbf^*_{\bar{T_*}}
			\end{array}
			\right],
		\end{aligned}
	\end{equation}
	where the first equality comes from Definition~\ref{def_gamma_stat_point_vec},
    and the second equality is derived from 
    \eqref{prox_scalar_sol}.
    
	2) 
	Suppose that $F_\gamma (\lb^*,\sbf^*; T_*)=\mathbf{0}$ with $T_*=T_\gamma(\lb^*,\sbf^*;C)$.
     Then for any $i \in T_*$, we have $g^*_{s_i}=0$ from \eqref{stationary_equations}, and further obtain $\left|\sbf^*_i\right|\geq \sqrt{2\gamma C}$ from \eqref{index_set_T}. Similarly, for any $i \in \bar{T}_*$, we have $\sbf^*_i=0$ and 
	$\left|\gamma g^*_{s_i} \right|\leq\sqrt{2\gamma C}$. Appealing to Lemma~\ref{lem_equivalent}, we can conclude that $(\lb^*,\sbf^*)$ is a $\gamma$-stationary point of \eqref{opt_vector}. 
\end{proof}

  Theorem~\ref{thm_F_equation} provides a useful characterization of $\gamma$-stationary points as solutions to the set of nonlinear equations \eqref{stationary_equations}. When such a characterization is used in conjunction with Theorem~\ref{thm_opt}, we see that a local minimizer of the problem $\eqref{opt_vector}$ can be obtained by solving \eqref{stationary_equations}.

\section{Algorithm Design for the Newton IPM}\label{sec:alg}

Theorem~\ref{thm_opt} has established the connection between local minimizers of the problem \eqref{opt_vector} and $\gamma$-stationary points.
We must notice that these local minimizers are interior points, i.e., they belong to the strict feasible set $\Dscr_{\srm}$ in \eqref{strict_feas_set}, while for the original problem \eqref{primal_opt_form}, we seek solutions on the boundary of $\Dscr_{\srm}$ because $\Lb$ is expected to have low rank. In order to achieve such a boundary solution, the IPM drives a series of solutions to the $\tau$-minimization problem with barrier parameters in $\{\tau_k\}_{k\geq 1}$ such that $\tau_k\to 0$ as $k\to \infty$.
In practice, the barrier parameter $\tau_k$ is updated according to a suitable decreasing rule, e.g., as a geometric sequence, and the algorithm terminates when $\tau$ is less than some specified threshold.
In addition, we employ the \emph{warm start} strategy in the IPM, where the initial point of $\tau_{k+1}$-minimization is the optimal solution of $\tau_k$-minimization, which can effectively improve the convergence speed.
Below we give the framework of the IPM to solve our $\ell_0$ FA problem \eqref{primal_opt_form}.
\begin{algorithm}[H]
	\caption{Interior-point method}
	\label{alg:ipm}
	\begin{algorithmic}[1]
		\Require $C$, $\mu$, $\gamma>0$, $\tau=\tau_0$, $\epsilon>0$, $\theta\in(0, 1)$, $(\Lb^0, \Sb^0)$.
		\State Vectorize $(\Lb^0,\Sb^0)\mapsto(\lb^0,\sbf^0)$.
		\While{$\tau > \epsilon$}
		\State Solve the $\tau$-minimization problem $\eqref{opt_vector}$ for a local minimizer $(\lb^*(\tau),\sbf^*(\tau))$.
		\State  $\tau  = \theta \tau$.
		\EndWhile 
		\Statex\textbf{Output:} The last $(\lb^*(\tau),\sbf^*(\tau))\mapsto(\Lb^*,\Sb^*)$.
	\end{algorithmic}
\end{algorithm}

Next, we only need to focus on solving the $\tau$-minimization problem  \eqref{opt_vector} inside the loop of the IPM for a fixed $\tau>0$. By Theorem~\ref{thm_F_equation}, this is equivalent to solving the stationary-point equation \eqref{stationary_equations} for which we use Newton's method for nonlinear equations.
 
Let $(\lb^k, \sbf^k)$ be the current iterate, and define the index set $T_k:=T_\gamma (\lb^k,\sbf^k;C)$. 
We first compute the Newton direction $\db^k$ which is the solution to the linear system of equations
\begin{equation}
	\nabla F_\gamma (\lb^k, \sbf^k; T_k) \,\db^k = - F_\gamma (\lb^k, \sbf^k; T_k).
\end{equation}
More specifically, we can partition the equations as follows:
\begin{equation}\label{eq_d_k}
	\!
	 \left[\!\!\begin{array}{cc|c}
		\nabla_{\lb, \lb}^2 h_\tau & \nabla_{\lb, \sbf_{T_k}}^2 h_\tau & \nabla_{\lb, \sbf_{\bar{T}_k}}^2 h_\tau \\
		\nabla_{\sbf_{T_k}, \lb}^2 h_\tau & \nabla_{\sbf_{T_k}, \sbf_{T_k}}^2 h_\tau & \nabla_{\sbf_{T_k}, \sbf_{\bar{T}_k}}^2 h_\tau \\
		\hline
		\mathbf{0} & \mathbf{0} & \Ib
	 \end{array}\!\!\right] \!\!\!
			\bmat
				\db_{\lb}^k \\
				\db_{\sbf_{T_k}}^k \\
				\db_{\sbf_{\bar{T}_k}}^k
				\emat
	\!\!=\!-\!
	\bmat
		g_{\lb}^k  \\
		g_{\sbf_{T_k}} ^k \\
		\sbf_{\bar{T}_k}^k
	\emat\!\!.
\end{equation}
It is not difficult to see that $\db_{\sbf_{\bar{T}_k}}^k=-\sbf_{\bar{T}_k}^k$, and by back substitution we find that $\db^k_{\lb\cup\sbf_{T_k}}$ solves the reduced linear equation
\begin{equation}\label{newton_direction}
	\left(\nabla_{\lb\cup\sbf_{T_k}, \lb\cup\sbf_{T_k }}^2 h_\tau\right) 
	\db^k_{\lb\cup\sbf_{T_k}}
	=\bmat
		\left(\nabla_{\lb, \sbf_{\bar{T}_k}}^2 h_\tau\right) \sbf_{\bar{T}_k}^k-	g_{\lb}^k  \\
		\left(\nabla_{\sbf_{T_k}, \sbf_{\bar{T}_k}}^2 h_\tau\right) \sbf_{\bar{T}_k}^k-g_{\sbf_{T_k}} ^k 
	\emat
\end{equation}
of size $m+|T_k|$. Clearly, the reduced coefficient matrix in \eqref{newton_direction} contains the four blocks in the upper-left corner of the full coefficient matrix in \eqref{eq_d_k}. Notice that the full Hessian $\nabla^2 h_\tau$ of $h_\tau$ is positive definite by Proposition~\ref{prop_jointly_convex}. Consequently, the reduced coefficient matrix in \eqref{newton_direction} is always positive definite (and thus invertible) because it is a principal submatrix of $\nabla^2 h_\tau$.

Then we need to determine the stepsize $\alpha$.
If we choose a unit stepsize, then we observe that $\sbf_{\bar T_k}^{k+1}=\sbf_{\bar T_k}^k+\db^k_{\bar T_k}=\zerob$ which indicates $\operatorname{supp}(\sbf^{k+1})\subseteq T_k$.
For this reason, we modify the standard rule $(\lb^{k+1},\sbf^{k+1})=(\lb^k,\sbf^k)+\alpha \db^k$ as $(\lb^{k+1},\sbf^{k+1})=(\lb^k(\alpha),\sbf^k(\alpha))$ where $\lb^k(\alpha)=\lb^k+\alpha \db^k_{\lb}$
and 
\begin{equation}
	\sbf^k(\alpha)=\bmat
		\sbf^k_{T_k}+\alpha \db^k_{\sbf_{T_k}} \\
		\sbf^k_{\bar{T}_k}+\db^k_{\sbf_{\bar{T}_k}} 
	\emat
	=\bmat
		\sbf^k_{T_k}+\alpha \db^k_{\sbf_{T_k}} \\
		\mathbf{0}
	\emat.
\end{equation}
In plain words, we take a unit stepsize for the block of variables $\sbf^k_{\bar{T}_k}$ while do line search to determine a stepsize $\alpha$ for the rest variables.
 Algorithm \ref{alg:newton} summarizes the steps of Newton's method for a solution of \eqref{eq_d_k}.
\begin{algorithm} 
	\caption{Newton's method for (\ref{opt_vector})}
		\label{alg:newton}
	\begin{algorithmic}[1]
		\Statex \textbf{Input:} $C$, $\mu$, $\gamma>0$, $\epsilon>0$, $\delta>0$, $\sigma\in(0, \frac{1}{2})$, $\beta\in(0, 1)$, $(\lb^0,\sbf^0)$.
		\While{the stopping conditions are not satisfied}
		\State Compute $T_k = \{i\in \Nbb_m:|s_i^k-\gamma g_{s_i}^k|\geq \sqrt{2\gamma C}\}.$

        \If {$\db^k$ satisfies
				\begin{equation}
						\innerprod{g^k_{\sbf_{T_k}}}{\db^k_{\sbf_{T_k}} } \leq -\delta\|\db^k_{\sbf}\|^2+\|{\sbf}^k_{\bar{T}_k}\|^2/(4\gamma),\nn
				\end{equation}}
			   \State update $\db^k$ by solving \eqref{eq_d_k}. 
			\Else 
                \State update $\db^k$ by
			\begin{equation*}
				 \db^k_{\lb} =-g^k_{\lb},\  \db^k_{\sbf_{T_k}}=-g^k_{\sbf_{T_k}},\  \db^k_{\sbf_{\bar{T}_k}}=-\sbf_{\bar{T}_k}^k.
			\end{equation*}
            \EndIf
		\State Find the smallest nonnegative integer $v_k$ such that
		\begin{equation*}
			h_\tau(\lb^{k+1}(\beta^{v_k}), \sbf^{k+1}(\beta^{v_k}))\leq h_\tau(\lb^{k},\sbf^k)+\sigma\beta^{v_k}\innerprod{g^k}{\db^k}.
		\end{equation*}
		\State Update $(\lb^{k+1}, \sbf^{k+1})=(\lb^{k}(\alpha^k), \sbf^{k}(\alpha^k))$ with $\alpha^k=\beta^{v_k}$.
		\EndWhile 
		\Statex\textbf{Output:} $(\lb^*, \sbf^*)=$ the last $(\lb^k, \sbf^k)$.
	\end{algorithmic}
\end{algorithm}

\section{Numerical Examples}\label{sec:experiment}
 
 In this section, we show the convergence performance of the IPM via simulations on synthetic data sets. The basic setup of our simulations are given next.
 
 
\emph{Synthetic data description}.
With reference to the factor model \eqref{generate_model},  we take the size of the observation vector $p=40$, and the number of hidden factors $r=5$.
Then we randomly generate a loading matrix $\Gammab\in\Rbb^{p\times r}$ with linearly independent columns and a positive-definite sparse (nondiagonal) matrix $\hat{\Sb}$ such that the signal-to-noise ratio (SNR) equals to $1$, where the SNR is defined as $\|\Gammab\Gammab^\top\|_\F/\|\hat\Sb\|_\F$.
Next we generate a sample sequence $\{\yb_1,\yb_2,\cdots ,\yb_N\}$ of length $N=1200$.




\emph{Initialization of Algorithm~\ref{alg:ipm}}.
Since the effective domain of the problem \eqref{opt_vector} consists of interior points only,  the initial variables $\Lb^0$ and $\Sb^0$ must be positive definite. Therefore, 
 we initialize Algorithm~\ref{alg:ipm} with $(\Lb^0, \Sb^0)=(\frac{1}{2}\check{\Sigmab}, \frac{1}{2}\check{\Sigmab})$, where $\check{\Sigmab}$ is the sample covariance matrix computed by \eqref{sample_cov}.

\emph{Stopping conditions}.
The termination of the outer iteration in Algorithm \ref{alg:ipm} is determined by the parameter $\epsilon$, where a smaller value of $\epsilon$ leads to a better approximate solution. In our simulations, we set $\epsilon=10^{-6}$, and the initial value of $\tau$ as $\tau_0=0.5$. According to Theorems~\ref{thm_opt} and \ref{thm_F_equation}, Algorithm \ref{alg:newton} can be considered to stop when
$$\frac{\|F_\gamma(\lb^k,\sbf^k;T_k)\|}{\sqrt{2m}}\leq 10^{-4},$$
where $m$ is the size of the vectorized variable $\lb$ or $\sbf$.

 \emph{Parameters setting}.
In our implementation of Algorithm~\ref{alg:newton}, we set $ \delta=10^{-4}$, $\sigma = 5\times 10^{-5}$, and $\beta=0.5$.  The parameter $\gamma$ in \eqref{stationary_vector_s}, which can be interpreted as a stepsize in proximal gradient descent, directly affects the sparsity of the optimal $\sbf^*$, and must be adjusted by trial and error for each specific data set, see \cite[Sec.~V]{FA_TAC_24}.
For the choice of the tuning parameters $C$ and $\mu$, we employ  the same Cross Validation procedure as described in \cite{FA_TAC_24}.

\begin{figure}[t]
	\begin{minipage}[b]{.49\linewidth}
		\centering
		\centerline{\includegraphics[width=1\linewidth]{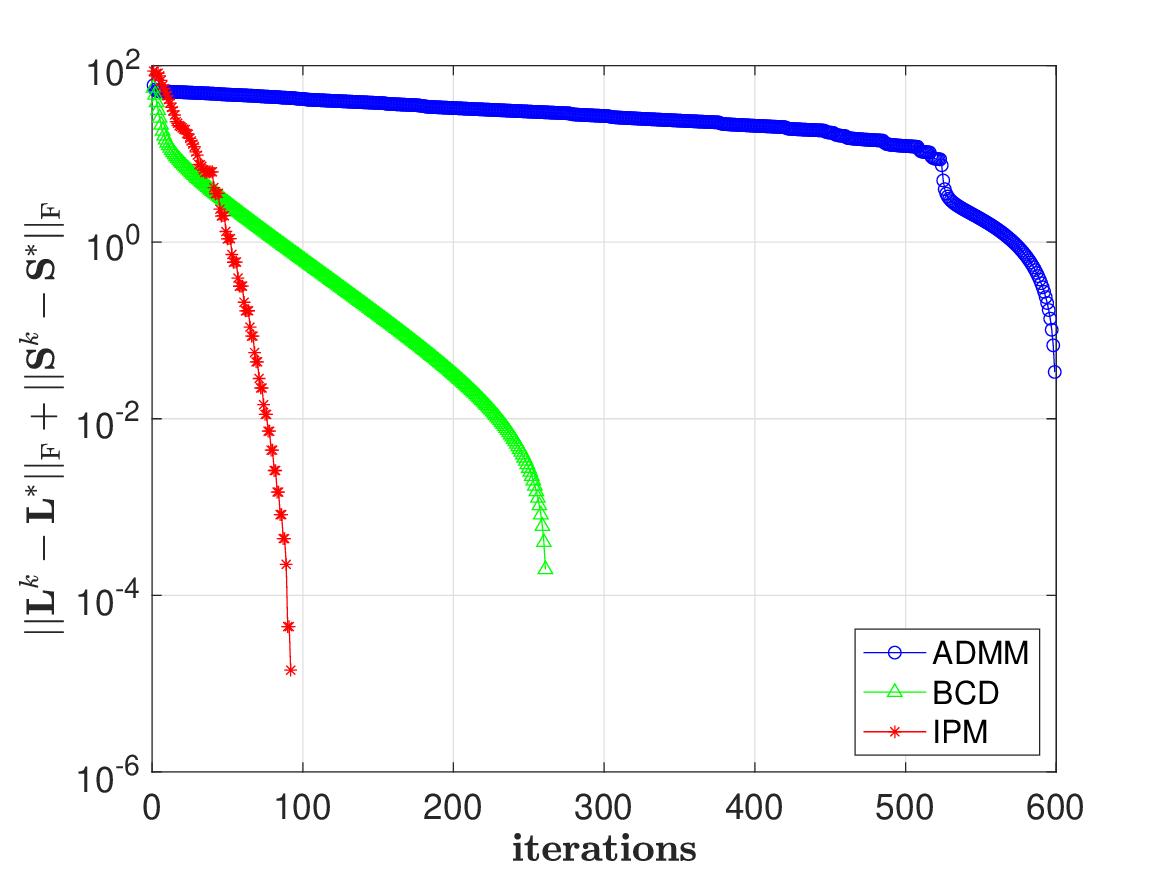}}
		\centerline{(a) $\theta=0.5$}
	\end{minipage}
	\hfill
	\begin{minipage}[b]{.49\linewidth}
		\centering
		\centerline{\includegraphics[width=1\linewidth]{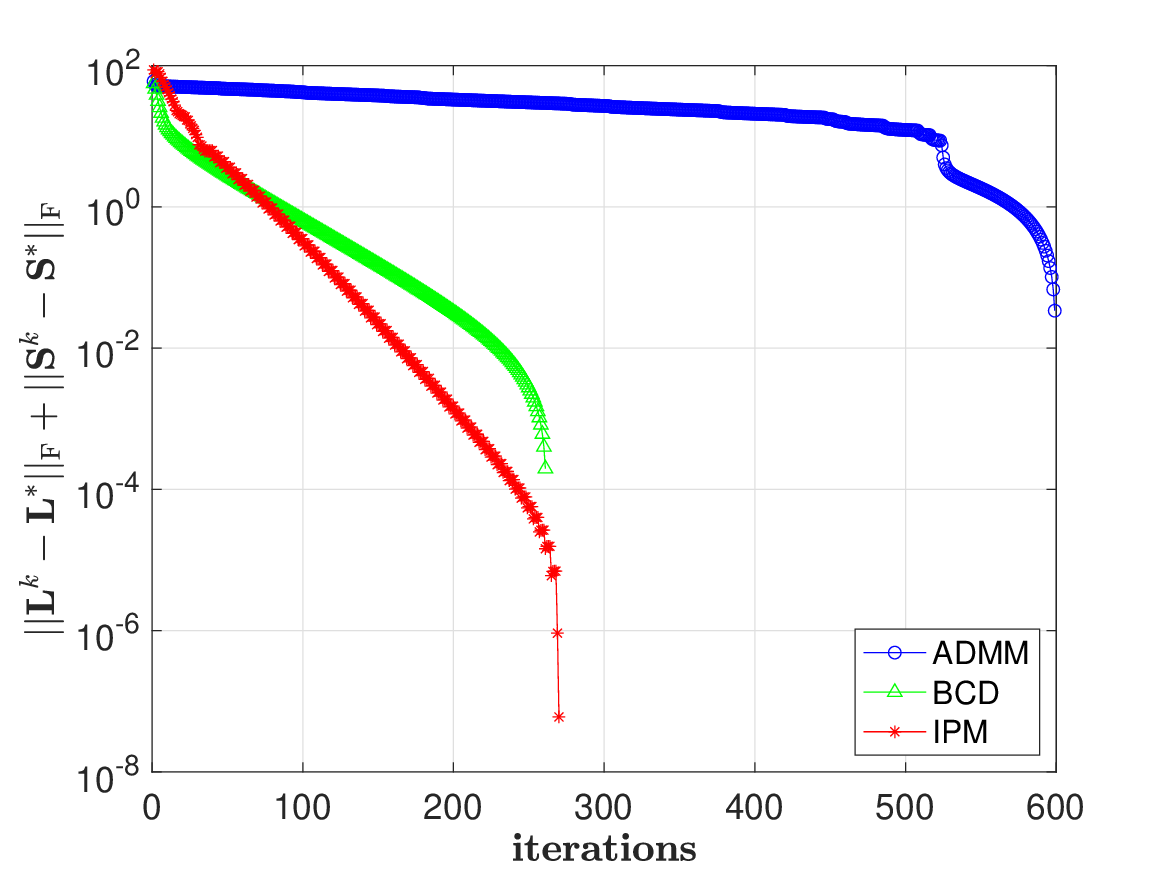}}
		\centerline{(b) $\theta=0.8$}
	\end{minipage}
    \caption{Comparison of the convergence rates among the three algorithms ADMM, BCD, and
    IPM applied to the same data set with different values of the parameter $\theta$ for the IPM.  
    }
\label{fig:convergence}
\end{figure}



\emph{Simulation results}.The number of outer iterations in Algorithm~\ref{fig:convergence} is determined by the decay ratio $\theta$ for the barrier parameter $\tau$.
    On the synthetic data set described at the beginning of this section, we have run the ADMM in \cite{Wang-Zhu-2023-SAM}, BCD in \cite{FA_CDC_24}, and our proposed IPM with two different values of $\theta$, and their convergence rates  are compared in  Fig.~\ref{fig:convergence}.
	The left and right panels correspond to $\theta=0.5$ and $0.8$, respectively.
	It must be noted that in Fig.~\ref{fig:convergence}, the horizontal axis (iterations) corresponding to the red curve does not represent the outer iterations of Algorithm \ref{alg:ipm},
	but rather the total number of inner-loop iterations performed by  Newton's method for all the values of $\tau$ starting from $\tau_0$ until the termination of Algorithm \ref{alg:ipm}.
	During this process, one can work with the geometric progression of $\tau$ and deduce  that   the total number of outer iterations of the IPM is $18$ and $58$ for $\theta=0.5$ and $0.8$, respectively. It has also been observed that Newton's method typically converges within six iterations for each inner loop in both cases starting from the first value of $\tau$ that is less than $10^{-2}$.
	Clearly, compared to the sublinear convergence of the ADMM and the linear convergence of the BCD algorithm, the IPM exhibits a significantly superior convergence rate which appears to be superlinear. It should also be noted, however, that Newton's method is usually computationally expensive, because it involves second-order derivatives. More comprehensive simulation studies 
    will be carried out in a future work.
    



\section{Conclusion}\label{sec:conc}

This article has explored the interior-point method (IPM) for $\ell_0$ factor analysis which reformulates the original inequality constrained optimization problem into a series of unconstrained $\ell_0$ regularized optimization problems using the logarithmic barrier function. 
Each instance of the unconstrained problem is then solved using Newton's method applied to a stationary-point equation, and this constitutes the inner loop of the IPM.
The connection of the stationary-point equation and the optimal solutions to the unconstrained problem is elucidated via the $\gamma$-stationary point, a kind of KKT point properly generalized to our nonconvex nonsmooth problem.
Furthermore, the numerical experiments have verified that the proposed IPM has an improved convergence rate compared to the BCD algorithm and the ADMM, as expected from second-order algorithms.

For future research, it will be interesting to extend the framework of the current paper to the dynamic version of factor analysis and other types of low-rank sparse sparse graphical models in the style of \cite{songsiri2010topology,ciccone2020learning,alpago2022scalable,falconi2024robust,you2024sparse}.
 

\bibliographystyle{IEEEtran}
\bibliography{refs}

\end{document}